\theoremstyle{plain}
\newtheorem{theorem}{\bf Theorem}[section]
\newtheorem{lem}[theorem]{\bf Lemma}
\newtheorem{defi}[theorem]{\bf Definition}
\newtheorem{rem}[theorem]{\bf Remark}
\newtheorem{cor}[theorem]{\bf Corollary}
\newtheorem{ex}[theorem]{\bf Example}
\newtheorem{conv}[theorem]{\bf Convention}
\newtheorem*{conj*}{\bf Conjecture}
\def\rank{\ensuremath{\mathrm{rank}}}
\def\diam{\ensuremath{\mathrm{diam}}}
\begin{document}
\keywords{Diameter of a group, rank of a group, non-abelian simple groups}
\title{Diameter of a direct power of alternating groups}
\author{ Azizollah Azad}
\address {Department of Mathematics, Faculty of Sciences Arak University, Arak, Iran.}
\email{a-azad@araku.ac.ir}
\author{ Nasim Karimi}
\address{Instituto de Matem\'atica e estat\'istica, Universidade do Estado de  Rio de janeiro, Rio de Janeiro,Brasil}
\email{nasim@ime.uerj.br}
\date{\today}
\maketitle
\keywords 
\begin{abstract} 
So far, it has been proven that if $G$ is an abelian group , then  the diameter of $G^n$ with respect to any generating set is  $O(n)$; and if $G$ is nilpotent, symmetric or dihedral, then there exists a generating set of minimum size, for which the diameter of $G^n$ is $O(n)$ \cite{Karimi:2017}. In \cite{Dona:2022} it has been proven that if $G$ is a non-abelian simple group, then the diameter of $G^n$ with respect to any generating set is $O(n^3)$. In this paper we estimate the diameter of direct power of alternating groups $A_n$ for $n \geq 4$,  i.e. a class of non-abelian simple groups. We show that there exist a generating set of minimum size for  $A_4^n$, for which the diameter of $A_4^n$ is $O(n)$. 
For $n \geq 5$, we show that there exists a generating set of minimum size for  $A_n^2$, for which the diameter of $A_n^2$ is at most $O(ne^{(c+1) (\log \,n)^4 \log \log n})$
, for an absolute constant $c >0$. Finally for $ 1\leq n \leq 8 $, we provide generating sets of size two for $A_5^n$ and we show that the  diameter of $A_5^n$ with respect to those generating sets is $O(n)$. These results are more pieces of evidence for a conjecture which has been presented in \cite{Karimithesis:2015} in 2015.  
\end{abstract}

%\begin{abstract}  In \cite{Karimi:2017} is presented two conjectures concerning the diameter of a direct power of a finite group. The first conjecture states that 
%the diameter of $G^n$ with respect to any generating set is at most $n(|G|-\rank(G))$; and  
% the second one states that there exists a generating set $A$, of minimum size, for $G^n$ such that
%the diameter of $G^n$ with respect to $A$ is at most  $n(|G|-\rank(G))$.
% The first conjecture is proved for abelain groups and the second one is proved for nilpotent groups, symmetric groups and dihedral groups. In this paper we show that the second conjecture is satisfied the alternating group $A_4$ for $n\geq 1$ and alternating group  $A_5$ for $1\leq n \leq 8$.  
% \end{abstract}
%_______________________________________________________________________________________________________________________________

\section{Introduction}
Let $G$ be a finite group with a generating set $A$. By diameter of $G$ with respect to $A$ we mean the maximum over $g \in G$ of the length of the shortest word in $A$ expressing $g$. Finding a bound for the diameter of a finite group is an important area of research in finite group theory.  We mention the most important conjecture in this area, known as the Babai's conjecture \cite{Babai&Seress:1988}: every non-abelain finite simple group G has diameter less than or equal to $log^k |G|$, where $k$ is an absolute constant. The conjecture is still open, despite great progress towards a solution both for alternating groups and for groups of Lie type.

  Producing a bound for the diameter of a direct product of simple groups, depending on the diameter of their factors, have been used more than once for proving the Babai's conjecture. In \cite{Babai&Seress:1992}, it has shown that for $G=T_1\times T_2 \times \cdots T_n$, in which $T_i$ are non-abelain simple groups, $\diam(G) \leq 20 n^3 h^2 $, such that $h$ is the maximum diameter of $T_i$'s.  In \cite{Helfgott:2019}, this bound improved to be a bound depending on $h$ instead of $h^2$, when the factors are alternating groups; and then in \cite{Dona:2022}, it is generalized  for every non-abelain simple groups. So far, all the upper bounds presented, are depending on $n^3$. This paper is organized as follows:
  
  In section \ref{sec:1}, we find generating sets of minimum size for $A_4^n$  for $n\geq 1$, and we show that the diameter of $A_4^n$, with respect to those generating sets is $O(n)$ for $n\geq 2$.

In section \ref{sec:2}, for $n\geq 5$, we find generating sets of size two for $A_n^2$ , for which the diameter of $A_n^2$ is $O(n e^{(c+1)(\log n)^4 \log \log n})$, for an absolute constant $c >0$. 

In section \ref{sec:3}, we show that there exist generating sets of size two for $A_5^n\, (1 \leq n\leq 8) $ for which, the diameter of $A_5^n$ is at most $n \,(|A_5|-\rank(A_5))=58\,n$.

\section{Preliminaries}
 Throughout the paper all groups are considered to be finite . The subset $A\subseteq G$ is a generating set of $G$, if every element of $G$ can be expressed as a sequence of elements in $A$.\footnote{Usually $A\subseteq G$ is considered to be a generating set, if every element of $G$ can be expressed as a sequence of elements in $A\cup A^{-1}$. When $G$ is finite the definitions coincide.}  By the rank of $G$, denoted by $\rank(G)$,  we mean the cardinality of any of the smallest generating sets of $G$. 
 By the length of a non identity element $g\in G$, with respect to $A$, 
 we mean the minimum length of a sequence expressing $g$ in terms of elements in $A$.
 Denote this parameter by $l_A{g}$. 
 
 %Similarly we define the symmetric length of  a non identity element $g\in G$, with respect to $A$,  to be the minimum length of a sequence expressing $g$ in terms of elements in $A \cup A^{-1}$. Denote this parameter by $l^{s}_A{g}$.
 
 \begin{conv}
 We consider the length of identity to be zero, i.e. $l_A(1)=0$ for every generating set $A$.  
 \end{conv}
 
 \begin{defi}
 Let $G$ be a finite group with generating set $A$. By the  \emph{diameter} of $G$ with respect to $A$ we mean  
  $$\diam(G,A):=\max \{l_A(g): g \in G \}.$$ 
% And by the symmetric \emph{diameter} of $G$ with respect to $A$ we mean
%  $$\diam^{s}(G,A):=\max \{l^{s}_A(g): g \in G \}.$$  

 \end{defi} 

% \begin{notation}
% Let $G$ be a finite group with a generating set $A$. Let $S$ be a subset of $G$. Denote by $Ml_A(S)$ the maximum of $l_A(s)$ over all $s$ in $S$ . Note that $Ml_A(G)=\diam(G,A).$
% \end{notation}
 
%\begin{notation}
%Denote by $D(G)$ the maximum diameter over all generating sets of $G$.
%\end{notation} 

The next definition introduces a generating set (let us call it canonical) for any direct power $G^n$ with respect to a generating set of $G$. 
\begin{defi}\label{canonical-generating-set} 
Let $G$ be a finite group with a generating set $A$. By the \emph{canonical generating set} of $G^n$ with respect to $A$, we mean the set   
$$C^n(A):=\{(1,\ldots,\overbrace{a}^{i\,\mathrm{th}},\ldots,1) : i \in \{1,2,\ldots, n \}, a \in A \}.$$
\end{defi} 

\begin{rem}\label{property} 

 If $G$ be a group with the property that $\rank(G^n) = n \, \rank(G)$ then the canonical generating set of $G^n$ is a generating set of minimum size and the diametr of $G^n$ with respect to  $C^n(A)$ is at most  $O(n)$ \cite{Karimi:2017}. Note that the alternating groups $A_n$ for $n\geq 4$,  dose not have the property that $\rank(G^n) = n \, \rank(G)$.    
 
 %It means that $G$  satisfies the weak conjecture . 
\end{rem}

We explain the following easy fact as a remark. 
\begin{rem}\label{simple}
Let $(g_1,g_2,\ldots,g_n) \in G^n=\langle A \rangle $. Since
$ (g_1,g_2,\ldots,g_n)$ is a product of $n$ elements of the form  $(1,\ldots,g_i,\ldots,1),$ then we have 
\begin{equation}\label{length1} 
l_A(g_1,g_2,\ldots,g_n)\leq \sum_{i=1}^n l_A(1,\ldots,g_i,\ldots,1). 
\end{equation}
\end{rem}
\begin{defi}
By an \emph{$n$-basis} of a group $G$ we mean any ordered set of $n$ elements $x_1,x_2,\ldots,x_n$ of $G$ which generates $G$. Furthermore, two $n$-bases $x_1,x_2,\ldots,x_n$ and  $y_1,y_2,\ldots,y_n$ of $G$ will be called \emph{equivalent} if there exists an automorphism $\theta$ of $G$ which transforms one into the other:
$$ x_i \theta = y_i,$$
for each $i=1,2,\ldots,n$. Otherwise the two bases will be called \emph{non-equivalent}. 
\end{defi}

In general, we have the following lemma for the rank of a direct power of a finite group $G$:
\begin{lem}\label{lem:rank-inequality} {\rm \cite{Wiegold:1974}}
Let $G$ be a finite group and $k$ be a positive integer. The following inequalities hold:
\begin{equation}\label{rank-inequality}
k\, \rank (G/G') \leq \rank (G^k) \leq k\, \rank(G),
\end{equation}
where $G'$ is the commutator subgroup of $G$.
\end{lem}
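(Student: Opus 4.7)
The plan is to treat the two inequalities separately. The upper bound $\rank(G^k)\leq k\cdot\rank(G)$ is immediate from the canonical generating set construction: if $A$ is a minimum generating set of $G$, then $C^k(A)$ from Definition~\ref{canonical-generating-set} is a generating set of $G^k$ of size $k|A|=k\cdot\rank(G)$, so any minimum generating set of $G^k$ has at most this many elements.

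For the lower bound, my plan is to pass to the abelianization and invoke the structure theorem for finite abelian groups. Three ingredients are needed: (i) for any normal subgroup $N\trianglelefteq H$ one has $\rank(H/N)\leq \rank(H)$, because generators of $H$ project to generators of $H/N$; (ii) commutators in a direct product act coordinate-wise, so $(G^k)'=(G')^k$, yielding $G^k/(G^k)'\cong (G/G')^k$; (iii) for every finite abelian group $A$ the identity $\rank(A^k)=k\cdot\rank(A)$ holds. Once these are in hand, one chains them to conclude
$$\rank(G^k)\;\geq\;\rank\!\bigl(G^k/(G^k)'\bigr)\;=\;\rank\!\bigl((G/G')^k\bigr)\;=\;k\cdot\rank(G/G').$$

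The main ingredient that requires a short proper argument is (iii); the rest is bookkeeping. The cleanest route is via elementary abelian quotients: for any finite abelian group $A$ and prime $p$ dividing $|A|$, the quotient $A/pA$ is an $\mathbb{F}_p$-vector space whose dimension equals the $p$-rank of $A$, and Nakayama-style reasoning gives $\rank(A)=\max_{p}\dim_{\mathbb{F}_p}(A/pA)$. Since $A^k/pA^k\cong (A/pA)^k$ has $\mathbb{F}_p$-dimension $k\cdot\dim_{\mathbb{F}_p}(A/pA)$, the same prime that realises the maximum for $A$ realises it for $A^k$, forcing $\rank(A^k)=k\cdot\rank(A)$. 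Apart from this, the only point worth double-checking carefully is the identification $(G^k)'=(G')^k$, which follows because a commutator $[(g_1,\dots,g_k),(h_1,\dots,h_k)]$ in $G^k$ equals $([g_1,h_1],\dots,[g_k,h_k])$, so coordinate-wise products of $G$-commutators generate the full commutator subgroup of $G^k$.
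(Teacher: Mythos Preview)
Your argument is correct. The upper bound via the canonical generating set is immediate, and your chain for the lower bound---quotients cannot increase rank, $(G^k)'=(G')^k$ so $G^k/(G^k)'\cong(G/G')^k$, and $\rank(A^k)=k\cdot\rank(A)$ for finite abelian $A$ via the formula $\rank(A)=\max_{p}\dim_{\mathbb{F}_p}(A/pA)$---is sound and cleanly executed.

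However, there is nothing to compare against: the paper does not prove this lemma. It is stated with a citation to Wiegold~\cite{Wiegold:1974} and used as a black box. So your write-up is not an alternative to the paper's proof but rather a self-contained justification of a result the authors simply quote. If your goal is fidelity to the paper, the appropriate ``proof'' is the citation; if your goal is to make the exposition self-contained, what you have written is a perfectly good insertion.
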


\begin{defi}
A group is said to be \emph{perfect} if it equals its own commutator subgroup; otherwise it is called \emph{imperfect}.
\end{defi}

\begin{rem}\label{rem:rank}
By Lemma \ref{lem:rank-inequality},  if $G$ is a perfect group, then the lower bound in the inequality \eqref{rank-inequality} is zero, hence the first inequality in  lemma \ref{lem:rank-inequality} is trivial. If $G$ is imperfect, then the first inequality in lemma \ref{lem:rank-inequality} gives a lower bound, depending on $k$, for the rank of $G^k$. 
\end{rem}

Since alternating group $A_4$ is imperfect; and for $n\geq 5$ alternating groups $A_n$ are perfect, then by  Remark \ref{rem:rank}, we need to verify them in the separate sections. 

%_______________________________________________________________________________________________________________
 %_________________________________________Evidence________________________________________________________________

\section{The diameter of a direct power of alternating group $A_4$ }\label{sec:1}
%Note that for $n \leq 3$ alternating group $A_n$ is abelain then satisfies the strong and consequently the weak  conjecture \cite{Karimi:2017}. Then we consider the alternating groups $A_n$ for $n\geq 4$. 
%For proving the weak conjecture, the first thing which we need is to find a generating set of minimum size for $G^n$. This generating set may be the canonical generating set,  when $G$ has the property $\
%\rank (G^n) = n \, \rank(G)$ (see Remark \ref{property}).  But when $G$ dose not have the property  $ \rank(G^n) = n \, \rank(G)$, finding such generating set it is  not obvious.  Alternating group $A_n$  for $n \geq 4$ dose not have the property $\rank(G^n) = n \, \rank(G)$ . Then we have to use another techniques to  find a generating set of minimum size for $A_4^n$ and $A_5^n$. 

%Finally we show that $A_4$ satisfies the weak conjecture for $n\geq 1 $ and $A_5$ satisfies the weak conjecture for $1\leq n \leq 19$.    

We use the following lemma for finding a generating set of minimum size for a direct power of alternating group $A_4$.

\begin{lem}\label{rank-G^n}
Let $G$ be a finite imperfect group. If $G$ is generated by $k$ elements of mutually coprime orders, then 
$\rank(G^n)=n,$ for $n \geq k$.
\end{lem}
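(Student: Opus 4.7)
The plan is to prove the two inequalities $\rank(G^n)\geq n$ and $\rank(G^n)\leq n$ separately. The lower bound is immediate from Lemma \ref{lem:rank-inequality}: since $G$ is imperfect, $G/G'$ is nontrivial, so $\rank(G/G')\geq 1$ and therefore $\rank(G^n)\geq n\cdot\rank(G/G')\geq n$.

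For the upper bound, I would exhibit an explicit generating set of size $n$. Let $g_1,\ldots,g_k$ be the given generators of $G$ with pairwise coprime orders $d_1,\ldots,d_k$. For each $i\in\{1,2,\ldots,n\}$ define $y_i\in G^n$ to have $g_l$ in coordinate $i+l-1\pmod n$ for each $l\in\{1,\ldots,k\}$, and the identity in all other coordinates. Because $n\geq k$, the $k$ indices $i,i+1,\ldots,i+k-1$ taken modulo $n$ are pairwise distinct, so $y_i$ is well defined. I would then claim that $\langle y_1,\ldots,y_n\rangle=G^n$.

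The key step is a Chinese Remainder Theorem argument that exploits the coprimality of the orders. For each $l\in\{1,\ldots,k\}$ the CRT produces an integer $e_l$ with $e_l\equiv 1\pmod{d_l}$ and $e_l\equiv 0\pmod{d_m}$ for every $m\neq l$. Since $g_l^{e_l}=g_l$ and $g_m^{e_l}=1$ for $m\neq l$, the power $y_i^{e_l}$ equals $g_l$ in coordinate $i+l-1\pmod n$ and the identity in every other coordinate. Holding $l$ fixed and letting $i$ range over $\{1,\ldots,n\}$ places $g_l$ in each of the $n$ coordinates; ranging over $l$ as well produces all elements of the form $(1,\ldots,g_l,\ldots,1)$ with $g_l$ in any prescribed coordinate. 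These elements generate the full factor $\{1\}^{j-1}\times G\times\{1\}^{n-j}$ in each coordinate $j$ (since $g_1,\ldots,g_k$ generate $G$), so the $y_i$'s generate $G^n$.

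I do not anticipate a serious obstacle in carrying this out; the only care required is a clean bookkeeping of the cyclic indexing modulo $n$ and a routine verification that the CRT exponents $e_l$ act on each $y_i$ as described. Combining the two bounds yields $\rank(G^n)=n$, as claimed.
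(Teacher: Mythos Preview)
Your proposal is correct and follows essentially the same approach as the paper: the lower bound comes from Lemma~\ref{lem:rank-inequality}, and for the upper bound you build the same cyclically shifted $n$-tuples and use the coprimality of the orders (via CRT) to isolate each generator in each coordinate, recovering the canonical generating set $C^n(A)$. The only difference is cosmetic---you phrase the construction with modular indexing and make the CRT exponent explicit, whereas the paper splits into two index ranges and simply asserts the existence of the required power $\ell$.
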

\begin{proof}
Because $G$ is not perfect, it follows from Lemma \ref{lem:rank-inequality} that $\rank(G^n) \geq n$. Suppose
$A=\{a_1,a_2,\ldots,a_k\}$ is a generating set of $G$ such that the $a_i$'s are of mutually coprime orders. Let $n \geq k$. We construct a generating set of size $n$ for $G^n$. For $1\leq i \leq n$, define the elements $g_i \in G^n$ as follows:
\begin{align*}  
&g_i=(1,\ldots, \overbrace{a_1}^{i\,\mathrm{th}},a_2,\ldots,a_k,\ldots,1) & \mbox{for}~ 1 \leq i \leq n-k+1,\\ 
&g_i=(a_{n-i+2},a_{n-i+3},\ldots,a_k,1,\ldots, 1,\overbrace{a_1}^{i\,\mathrm{th}},\ldots,a_{n-i+1})& \mbox{for}~ n-k+2 \leq i \leq n.
\end{align*}
We prove that $C=\{g_1,g_2,\ldots,g_n\}$ is a generating set of $G^n$. If we show that $C$ generates $C^n(A)$, then we are done. Choose an arbitrary element $(1,\ldots,a_i,\ldots,1) \in C^n(A)$. Since the $a_i$'s are of mutually coprime orders, there exists a positive integer $\ell$ such that 
\begin{equation*}(1,\ldots,a_i,\ldots,1) =
(1,\ldots, a_1,\ldots,a_i,\ldots,a_k,\ldots,1)^{\ell}.
\end{equation*}
This yields the desired conclusion.
\end{proof}

The following example gives a generating set of minimum size for a direct power of the alternating group $A_4$. 
\begin{ex} \label{alternating-A4}
It is easy to see that $A_4$ is generated by the following two elements $$ \alpha=(1 ~2)(3~4) , ~\beta =(1~2~3).$$ Since $A_4$ is not perfect and $\alpha , \beta $ have coprime orders by Lemma \ref{rank-G^n}, the rank of $A_4^n$ is equal to $n$, for $n \geq 2$.
\end{ex}

\begin{theorem}\label{theorem1}
There exists a generating set of minimum size for $A_4^n$, for which the diameter of $A_4^n$ is at most $10 n $. 
\end{theorem}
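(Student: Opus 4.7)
The plan is to take $C = \{g_1, \dots, g_n\}$ to be the minimum-size generating set of $A_4^n$ furnished by Lemma~\ref{rank-G^n} and Example~\ref{alternating-A4}, applied with the coprime-order generators $\alpha = (1\,2)(3\,4)$ and $\beta = (1\,2\,3)$ of $A_4$. By that example $|C| = n = \rank(A_4^n)$, so $C$ is of minimum size, and the goal reduces to showing that every element of $A_4^n$ has $C$-length at most $10n$.

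The essential observation is that, because $|\alpha| = 2$ and $|\beta| = 3$, each $g_i$ has order $6$, and suitable powers of a single $g_i$ isolate one coordinate of $A_4^n$. Concretely, $g_i^{3}$ produces $\alpha$ at a single position with $C$-length $3$, while $g_i^{-2} = g_i^{4}$ produces $\beta$ at a single position with $C$-length $2$ (cyclically at position $1$ in the case $i = n$). In particular, every canonical generator of $A_4^n$ in the sense of Definition~\ref{canonical-generating-set} has $C$-length at most $3$.

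I would next fix $(h_1, \dots, h_n) \in A_4^n$ and apply Remark~\ref{simple} to reduce the problem to bounding $l_C(1, \dots, h_i, \dots, 1)$ for each $i$. A direct check on the twelve elements of $A_4$ shows $\diam(A_4, \{\alpha, \beta\}) \leq 3$, so each $h_i$ can be written as a word of length at most $3$ in $\{\alpha, \beta^{\pm 1}\}$. Translating each letter into a power of an appropriate $g_j$ as above contributes $3$ (for $\alpha$) or $2$ (for $\beta^{\pm 1}$) to the $C$-length, so the total contribution per coordinate is at most $10$. Summing over $i$ then yields $l_C(h_1, \dots, h_n) \leq 10n$.

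The main point where care is needed is the use of $g_i^{-2}$ in place of $g_i^{4}$: without exploiting that $g_i$ has order $6$, each $\beta$-letter would cost $4$ and the same argument would give only the weaker bound $12n$. The combination of this inverse-power shortcut with the bound $\diam(A_4, \{\alpha, \beta\}) \leq 3$ is what keeps the constant at $10$.
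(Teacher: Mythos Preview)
Your approach is the same as the paper's: take the minimum-size generating set $C$ from Lemma~\ref{rank-G^n}, isolate $\alpha$ and powers of $\beta$ at single coordinates as powers of the $g_i$, and sum over coordinates via Remark~\ref{simple}.

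The one real discrepancy is a convention issue. In this paper length is measured in \emph{positive} words over $A$ only (see the definition of $l_A$ and the accompanying footnote in the preliminaries), so $g_i^{-2}$ does not have $C$-length $2$; as a positive word it is $g_i^{4}$, of length $4$. Under that convention your inverse shortcut disappears, and as you yourself note the naive count then gives only $12n$. The paper recovers the constant $10$ by a slightly different bookkeeping: it records that $g_i^{2}$ isolates $\beta^{2}$ (cost $2$), $g_i^{3}$ isolates $\alpha$ (cost $3$), and $g_i^{4}$ isolates $\beta$ (cost $4$), then lists every element of $A_4$ explicitly as a short word in $\{\alpha,\beta,\beta^{2}\}$ and checks that the most expensive one, $\alpha\beta\alpha$, costs $3+4+3=10$. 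If you are working with the usual symmetric (undirected) diameter your argument is fine, and in fact yields $\le 9n$; but to match the paper's directed convention you need that explicit element-by-element check in place of the inverse trick.
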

\begin{proof}
As we mentioned before in Example \ref{alternating-A4}, the generating set $C$ constructed in the proof of Lemma \ref{rank-G^n} is a generating set of minimum size for $A_4^n$ for $n \geq 2$. We show that $\diam(A_4^n,C) \leq 10n$. Let $(g_1,g_2,\ldots,g_n) \in A_4^n$. By Remark \ref{simple}, it is enough to show that  $l_C(1,\ldots,1, g_i,1,\ldots,1) \leq 10$, for $1 \leq i \leq n$.
Because of the following equalities 
\begin{align*}
(1,\ldots,\overbrace{\alpha}^{i\,\mathrm{th}},\beta,\ldots,1)^3=(1,\ldots,\overbrace{\alpha}^{i\,\mathrm{th}},1,\ldots,1),\\
(1,\ldots,\alpha,\overbrace{\beta}^{i\,\mathrm{th}},\ldots,1)^4=(1,\ldots,1,\overbrace{\beta}^{i\,\mathrm{th}},\ldots,1),\\
(1,\ldots,\alpha,\overbrace{\beta}^{i\,\mathrm{th}},\ldots,1)^2=(1,\ldots,1,\overbrace{\beta^2}^{i\,\mathrm{th}},\ldots,1),
\end{align*}
we have
\begin{align*}
l_C(1,\ldots,\overbrace{\alpha}^{i\,\mathrm{th}},\ldots,1) \leq 3,\\
l_C(1,\ldots,\overbrace{\beta}^{i\,\mathrm{th}},\ldots,1) \leq 4,\\
l_C(1,\ldots,\overbrace{\beta^2}^{i\,\mathrm{th}},\ldots,1) \leq 2.
\end{align*}
On the other hand, the elements of $A_4$ can be represented over the generating set $\{\alpha, \beta \}$ as follows:
$$A_4=\{\alpha,\beta,\alpha^2,\alpha \beta,\beta \alpha, \beta^2, \alpha \beta \alpha, \alpha \beta^2, \beta \alpha \beta=\alpha \beta^2 \alpha, \beta^2 \alpha, \beta^2 \alpha \beta, \beta \alpha \beta^2 \}.$$
Now it is easy to see that the length of $(1,\ldots,\overbrace{g}^{i\,\mathrm{th}},\ldots,1)$ in the generating set $C$ is at most $10$ for every element $g \in A_4$, which completes the proof.
\end{proof}

\section{The diameter of $A_n ^2$, for $n\geq5$ }\label{sec:2}
Note that alternating groups $A_n$ for $n \geq 5 $ are  perfect. There is  a different approach to compute the rank of the direct power of perfect groups using the Eulerian function  of a  group (see \cite{Hall:1936,Wiegold:1974}). The following lemma is a consequence of the results in \cite{Hall:1936}.
\begin{lem}\label{Hall}
Let $G$ be a non-abelain simple group. If $G$ is generated by $n$ elements, then 
the set  $\{(a_{i1},a_{i2}\ldots,a_{ik}):i=1,\ldots,n\}$ will generate $G^k$ if and only if the following conditions are satisfied:
\begin{enumerate}
\item the set $\{a_{1i},a_{2i},\ldots,a_{ni}\}$ is a generating set of  $G$ for $i=1,\ldots,k$;
\item there is no automorphism $f: G\rightarrow G$ \\
which maps $(a_{1i},a_{2i},\ldots,a_{ni})$ to $(a_{1j},a_{2j},\ldots,a_{nj})$ for any $i\neq j$.
\end{enumerate}
\end{lem}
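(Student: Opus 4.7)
The plan is to prove both implications separately, with the reverse direction proceeding by induction on $k$.

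For the forward direction, assume the given $n$ tuples generate $G^k$. Projection onto the $i$-th coordinate is a surjective homomorphism $G^k \to G$, hence sends a generating set to a generating set, which gives condition (1). For condition (2), if some $f \in \mathrm{Aut}(G)$ satisfied $f(a_{\ell i}) = a_{\ell j}$ for all $\ell$, then every generator would lie in the proper ``twisted-diagonal'' subgroup $\{(g_1,\ldots,g_k) \in G^k : g_j = f(g_i)\}$, contradicting the assumption.

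For the reverse direction, I would induct on $k$, with base case $k=1$ being just condition (1). For the inductive step, let $H \leq G^k$ be the subgroup generated by the $n$ tuples and let $\pi\colon G^k \to G^{k-1}$ be the projection onto the first $k-1$ coordinates. Since both conditions clearly restrict to the first $k-1$ columns, the inductive hypothesis yields $\pi(H) = G^{k-1}$. Writing $H \cap (\{1\}^{k-1} \times G) = \{1\}^{k-1} \times N$, one checks that $N \leq G$ is normalized by the image of $H$ under projection to the last coordinate; by condition (1) at $i=k$ this image is all of $G$, so $N \trianglelefteq G$. Simplicity of $G$ then gives $N = G$ or $N = \{1\}$, and in the former case $H$ contains $\{1\}^{k-1} \times G$ and surjects under $\pi$ onto $G^{k-1}$, so $H = G^k$.

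The main obstacle is excluding the case $N = \{1\}$. Here $\pi|_H$ is an isomorphism, so $H$ is the graph of a homomorphism $\phi\colon G^{k-1} \to G$, surjective by condition (1) at $i=k$. I would then invoke the standard fact that every normal subgroup of $G^{k-1}$ (for $G$ non-abelian simple) is a product of factors each equal to $\{1\}$ or $G$; applied to $\ker\phi$, whose index equals $|G|$, this forces exactly one factor to be trivial, at some position $j \leq k-1$. Consequently $\phi(g_1,\ldots,g_{k-1}) = f(g_j)$ for some $f \in \mathrm{Aut}(G)$, and evaluating on the generators yields $f(a_{\ell j}) = a_{\ell k}$ for every $\ell$, contradicting condition (2). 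Thus $N = G$ and the induction closes.
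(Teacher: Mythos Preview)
Your proof is correct. Both directions are handled cleanly: the forward direction via projection and the twisted-diagonal subgroup, and the reverse direction by induction, using that $N = H \cap (\{1\}^{k-1}\times G)$ is normal in $G$ and then, in the case $N=\{1\}$, classifying $\ker\phi$ via the standard description of normal subgroups of a direct power of a non-abelian simple group.

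The paper, however, does not supply its own proof of this lemma at all. It simply states the result as ``a consequence of the results in \cite{Hall:1936}'' and moves on. So rather than reproducing or paralleling an argument from the paper, you have provided a self-contained proof where the authors chose to cite Hall. Your argument is the standard one and is essentially what lies behind Hall's result; it would serve well as a written-out justification if one did not want to send the reader to the reference.
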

Furthermore, in \cite{Hall:1936} Hall shows that the alternating group $A_5$ satisfies Lemma \ref{Hall} with $n=2$ for $1 \leq k \leq 19$ and  not for $k \geq 20$.  

Therefore, the following is an immediate consequence of Lemma \ref{Hall}.
\begin{cor}\label{Hall2}
A pair $(s_1,\ldots,s_k), (t_1,\ldots,t_k)$ will generate $A_5^k$ if and only if the following conditions are satisfied:
\begin{enumerate}
\item the set $\{s_i,t_i\}$ is a generating set of  $A_5$ for $i=1,...,k$;
\item there is no automorphism $f: A_5\rightarrow A_5$ which maps $(s_i,t_i)$ to $(s_j,t_j)$ for any $i\neq j$.
\end{enumerate}
Furthermore, $k=19$ is the largest number for which these conditions can be satisfied.
That is,
the rank of $A_5^k$ is equal to $2$ if and only if $1 \leq k \leq 19$.
\end{cor}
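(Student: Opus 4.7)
The proof is essentially a direct specialization, so my plan is to spend most of the effort justifying that Lemma \ref{Hall} applies in the form stated and that Hall's specific bound transfers unchanged.

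First I would verify the hypothesis of Lemma \ref{Hall} for $G = A_5$ with $n = 2$: recall that $A_5$ is a non-abelian simple group that is 2-generated (for example by $(1\,2\,3)$ and $(1\,2\,3\,4\,5)$), so Lemma \ref{Hall} is applicable with $n = 2$. Under this specialization, a set of the form $\{(a_{i1}, a_{i2}, \ldots, a_{ik}) : i = 1, 2\}$ in $A_5^k$ generates $A_5^k$ if and only if (1) each columnwise pair $\{a_{1i}, a_{2i}\}$ generates $A_5$, and (2) no automorphism of $A_5$ carries one column to another column. Renaming $a_{1i} =: s_i$ and $a_{2i} =: t_i$, this is exactly the two-condition statement in the corollary, which yields the biconditional.

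For the \emph{furthermore} claim, the sharp bound $k \leq 19$ is not something I would reprove: it is precisely the Hall result quoted in the sentence just before the corollary, which says that $A_5$ satisfies the conditions of Lemma \ref{Hall} with $n = 2$ for $1 \leq k \leq 19$ and fails for $k \geq 20$. Translating through the equivalence above, there exists a pair of elements generating $A_5^k$ exactly when $1 \leq k \leq 19$.

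Finally, for the rank conclusion, I would combine the two directions. If $1 \leq k \leq 19$, some 2-generating pair exists by what precedes, so $\rank(A_5^k) \leq 2$; since $A_5^k$ is non-cyclic (as $A_5$ itself is non-cyclic), we also have $\rank(A_5^k) \geq 2$, giving equality. If $k \geq 20$, no 2-element generating set exists by the above, so $\rank(A_5^k) \geq 3$. There is no real obstacle here, since everything is quotation plus bookkeeping; the only thing worth being careful about is ensuring the transcription between the ordered-set notation in Lemma \ref{Hall} and the ordered-pair notation of the corollary matches columnwise, as the numbered conditions read naturally in the other order.
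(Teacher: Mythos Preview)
Your proposal is correct and matches the paper's approach exactly: the paper presents the corollary as an immediate consequence of Lemma~\ref{Hall} together with Hall's quoted fact that $A_5$ satisfies those conditions with $n=2$ precisely for $1 \le k \le 19$, and your write-up is just a careful unpacking of that. The only addition you make beyond the paper is the brief check that $\rank(A_5^k)\geq 2$ via non-cyclicity, which is harmless and arguably completes the rank equivalence more explicitly than the paper does.
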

%Now our goal is fo find generating sets of size two for $A_n^2$ for $n \geq 5$. First consider the following definition: 
Now we are ready to prove the following theorem. 
\begin{theorem}\label{theorem2}
Let $n \geq 5$. There exists a generating set of size two for $A_n^2$, for which the diameter of $A_n^2$ is at most $O(ne^{(c+1) (\log \,n)^4 \log \log n})$, for an absolute constant $c >0$.
\end{theorem}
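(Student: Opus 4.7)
The plan is to construct a specific minimum-size generating pair $\{x,y\}$ of $A_n^2$ and produce short expressions for all elements via a ``trap one coordinate'' strategy. Fix any generating pair $\{s_1,t_1\}$ of $A_n$ and a second generating pair $\{s_2,t_2\}$ so that the ordered pairs $(s_1,t_1)$ and $(s_2,t_2)$ are non-equivalent under $\mathrm{Aut}(A_n)$; such a second pair exists for every $n\geq 5$, since the count of generating pairs far exceeds $|\mathrm{Aut}(A_n)|$. By Lemma~\ref{Hall}, the set $\{x,y\} := \{(s_1,s_2),(t_1,t_2)\}$ then generates $A_n^2$, and being of size $2$ it has minimum size. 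Let $D := e^{c(\log n)^4 \log\log n}$ denote the Helfgott--Seress upper bound on $\mathrm{diam}(A_n, A)$, valid for every generating set $A$.

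The key step is to exhibit short \emph{trapping words}: a word $w_1(X,Y)$ with $w_1(s_2,t_2)=1$ and $h_1 := w_1(s_1,t_1)\neq 1$, and symmetrically a word $w_2$ with $w_2(s_1,t_1)=1$ and $h_2 := w_2(s_2,t_2)\neq 1$. These exist because non-equivalent pairs induce distinct kernels on the corresponding epimorphisms $F_2\to A_n$. Performing a length-bounded search in $F_2$ and using $D$ in each coordinate to control the joint map $F_2\to A_n\times A_n$ yields $|w_i|\leq e^{(c+1-o(1))(\log n)^4\log\log n}$. Evaluating at $(x,y)$ then produces the short elements $(h_1,1)=w_1(x,y)$ and $(1,h_2)=w_2(x,y)$.

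With the trap words in hand, the assembly is standard. By Liebeck--Shalev's covering-number bounds, every non-trivial conjugacy class in $A_n$ has covering number $O(n)$; hence every $g\in A_n$ admits a factorization $g=\prod_{i=1}^{m} c_i h_1^{\epsilon_i} c_i^{-1}$ with $m=O(n)$ and $c_i\in A_n$. For each $c_i$, choose a word $v_i$ in two letters of length $\leq D$ with $v_i(s_1,t_1)=c_i$; then $v_i(x,y)$ has first coordinate $c_i$, and conjugating $(h_1,1)$ by $v_i(x,y)$ produces $(c_i h_1^{\epsilon_i} c_i^{-1},1)$ regardless of the uncontrolled second coordinate. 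Assembling,
\[
(g,1) \;=\; \prod_{i=1}^{m} v_i(x,y)\,\bigl(w_1(x,y)\bigr)^{\epsilon_i}\,v_i(x,y)^{-1},
\]
of length at most $m(2D+|w_1|) = O\bigl(n\, e^{(c+1)(\log n)^4\log\log n}\bigr)$. Using $w_2$ symmetrically handles $(1,g')$, and Remark~\ref{simple} combines the two halves to bound $l_{\{x,y\}}((g_1,g_2))$ as claimed.

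The hard part is the trapping-word step: one must genuinely bound $|w_i|$ within $e^{(c+1)(\log n)^4\log\log n}$. The Helfgott--Seress diameter enters once per coordinate, which is exactly what produces the exponent $c+1$ rather than $c$ in the final bound; any slack in this step would degrade the overall exponent. A secondary, easier point is producing non-equivalent generating pairs uniformly in $n$, which follows from a counting argument comparing the number of generating pairs with $|\mathrm{Aut}(A_n)|$.
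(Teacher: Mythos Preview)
Your approach differs substantially from the paper's and leaves the key step unproved. You correctly flag the trapping-word bound as ``the hard part,'' but the justification you offer---``a length-bounded search in $F_2$ using $D$ in each coordinate''---is not an argument. As written, finding a short word $w$ with $w(s_2,t_2)=1$ and $w(s_1,t_1)\neq 1$ is the same as reaching an element of the form $(h,1)$ in the Cayley graph of $A_n^2$ with respect to the very generating set whose diameter you are trying to bound, so a naive reading is circular. The claim can in fact be rescued: if every word $w$ of length at most $4D$ with $\phi_2(w)=1$ also satisfied $\phi_1(w)=1$, then $\phi_1$ would factor through $\phi_2$ on the ball $B_{2D}$, and since $\phi_2(B_D)=A_n$ the factoring map would be an automorphism of $A_n$ carrying $(s_2,t_2)$ to $(s_1,t_1)$, contradicting non-equivalence. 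This yields $|w_i|\le 4D$, stronger than what you assert---but you did not supply any such argument. Your interpretation of the exponent ``$c+1$'' as coming from two stacked applications of Helfgott--Seress is also a misreading; in the paper that is simply the name given to the Helfgott--Seress constant, and the factor $n$ in the final bound has a completely different source.

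The paper's proof is far more elementary and avoids all of this. It chooses the explicit generators $a=(1\,2\cdots n)$ and $b=(1\,2)(3\,4)$ for $n$ odd (with an analogous pair for $n$ even) and takes $G=\{(a,b),(b,a)\}$; since $a$ and $b$ have different orders, $(a,b)$ and $(b,a)$ are non-equivalent $2$-bases, so $G$ generates $A_n^2$ by Lemma~\ref{Hall}. Because $b$ has order $2$ and $a$ has odd order $n$, one gets $(a,b)^2=(a^2,1)$ and $(b,a)^n=(b,1)$ directly, so the ``trapping words'' cost at most $n$ and already produce a \emph{full} generating set $\{a^2,b\}$ of the first coordinate (and symmetrically $\{a^2,b\}$ of the second). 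Hence $\diam(A_n^2,G)\le 2n\cdot\diam(A_n,\{a^2,b\})$, and a single invocation of Helfgott--Seress finishes. No Liebeck--Shalev covering numbers, no search for trapping words, and no counting argument for non-equivalent pairs are needed.
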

\begin{proof}
For $n \geq 5$, let $a=(1\, 2 \,3\, \cdots\, n)$, $ b=(1 \,2)(3\, 4)$, $a'=(1\, 2\, 3\, \cdots\, n-1)$ and $ b'=(n-3 \,n-2)(n-1\, n)$. It is easy to see that $A=\{ a, b \}$ and $A'=\{a',b'\}$ are generating sets of $A_n$ for $n$ odd and $n$ even, respectively. Furthermore, if $n$ is odd, then $(a,b) , (b,a)$ are two non-equivalent 2-bases of $A_n$ and if $n$ is even, then $(a',b') , (b',a')$ are two non-equivalent 2-bases of $A_n$. By lemma \ref{Hall},  if $n$ is odd, then $A_n^2=<(a,b),(b,a)>$ and if $n$ is even, then $A_n^2=<(a',b'),(b',a')>$.  Let $G=\{(a,b),(b,a)\}$ and $G'=\{(a',b'),(b',a')\}$. Suppose for the moment that $n$ is odd. For $(x,y) \in A_n^2 $ we have $l_A(x,y)\leq l_A(x,1)+l_A(1,y)$. Combining this with the following equalities: 
\begin{align}
(a,b)^2=(a^2,1), 
(b,a)^n=(b,1),
(b,a)^2=(1,a^2), 
\text{and}
(a,b)^n=(1,b). 
\end{align}
 we obtain 
 \begin{equation}\label{1}
 \diam(A_n^2,G)\leq 2 \, n\, \diam(A_n, \{b,a^2\} ).
 \end{equation} 
 Replacing $\diam(A_n, \{b,a^2\} )$ with $O(e^{(c+1)(\log n)^4\log \log n})$ (see Theorem 6.6 in  \cite{Helfgott&Seress:2014}) in \ref{1} we get the desired conclusion. Similar arguments apply for the case that $n$ is even. 
\end{proof}

\begin{cor}\label{cor:An2}
 There exists a generating set of size two, for which the diameter of $A_n^2$ is at most $$2 (|A_n|-\rank(A_n))=n!-4,$$ for sufficiently large $n$.
\end{cor}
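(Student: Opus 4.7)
The plan is to deduce this corollary directly from Theorem \ref{theorem2} by a comparison of growth rates, without constructing any new generating set.

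First, I would take the generating set supplied by Theorem \ref{theorem2}, namely $G=\{(a,b),(b,a)\}$ when $n$ is odd and $G'=\{(a',b'),(b',a')\}$ when $n$ is even. Theorem \ref{theorem2} already asserts that the diameter of $A_n^2$ with respect to this generating set of size two is bounded above by $C\, n\, e^{(c+1)(\log n)^4 \log \log n}$ for some absolute constants $C,c>0$. Next, I would simplify the target quantity: since $\rank(A_n)=2$ for $n\geq 5$ (as already used throughout Section \ref{sec:2}) and $|A_n|=n!/2$, one has
\[
 2\bigl(|A_n|-\rank(A_n)\bigr) \;=\; n!-4.
\]

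The remaining content is then purely an asymptotic comparison between the upper bound $C\, n\, e^{(c+1)(\log n)^4\log\log n}$ and $n!-4$. By Stirling's formula, $\log(n!-4)=n\log n - n + O(\log n)$, whereas
\[
\log\!\bigl(C\, n\, e^{(c+1)(\log n)^4\log\log n}\bigr) \;=\; (c+1)(\log n)^4\log\log n + \log n + O(1).
\]
Since $(\log n)^4\log\log n = o(n\log n)$, the right-hand side is eventually dominated by $\log(n!-4)$, so that $C\, n\, e^{(c+1)(\log n)^4\log\log n}\leq n!-4$ for all sufficiently large $n$. Combining this inequality with the bound from Theorem \ref{theorem2} yields the claim.

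There is no real obstacle: the only step requiring care is verifying that the sub-exponential factor $e^{(c+1)(\log n)^4\log\log n}$ is indeed negligible against $n!$, which is immediate from Stirling. Everything else is bookkeeping on the rank and order of $A_n$.
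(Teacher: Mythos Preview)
Your proposal is correct and follows essentially the same approach as the paper: invoke Theorem~\ref{theorem2}, then compare the bound $O\bigl(n\,e^{(c+1)(\log n)^4\log\log n}\bigr)$ with $n!-4$ via Stirling's formula. The paper's own proof is the same argument, carried out slightly less carefully (it compares with $n!$ rather than $n!-4$ and bounds the exponent by $(\log n)^6$ before applying Stirling), so your version is, if anything, a bit more precise.
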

\begin{proof}
By theorem \ref{theorem2}, it is enough to show that $ne^{(c+1) (\log \,n)^4 \log \log n} \leq n! $, for sufficiently large $n$. Let $ \log n \geq (c+1)$, we have $n e^{(c+1) (\log \,n)^4 \log \log \,n} \leq n e^{(\log \,n)^6} $. Replacing $n!$ with the Stirling's formula, it is enough to say that 
 $e^{(\log \,n)^6+n} \leq n^{n-\frac{1}{2}} \sqrt{2\pi }$, for sufficiently large $n$, which is obvious.   
\end{proof}

\section{The diameter of a direct power of $A_5$}\label{sec:3}
We know that $19$ is the largest number for which the group  $A_5^k$ is generated by two elements for $1 \le k \le 19$ (see \cite{Hall:1936}).
Let $a=(1 2)(3 4),b=(1 2 3 4 5), c=(1 2 3),
d=(1 3 5),e=(2 4 5), f=(1 2 3 5 4),
g=(1 2 5 4 3),
h=(1 2 5 3 4), i=(1 3 2 5 4)$.
 We have checked with the Groups, Algorithms, Programming  (GAP) - a System for Computational Discrete Algebra- that  the pairs 
\begin{align*}
&( a, b),(b, a),( a, b^2 ),(b^2,a), (c,b),(b,c),(c,b^2),(b^2,c),(b,c^2),(c^2,b),(b^2,c^2),(c^2,b^2),\\
&(d,a),(a,d),(d,e),(b,f),(b,g),(b,h),(b,i) 
\end{align*}
 are 19 non-equivalent 2-basis of $A_5$.
%Note that the relation defined in the above definition is an equivalent relation. Let $S$ be a set of $n$-basis of $G$ which has exactly one element from each equivalent classes. The following proposition estates that $$max \{diam(G,A)| ~A \in S\}=max\{diam(G,A) |~|A|=n\}.$$ 
%
% \begin{prop}
%  Let $G$ be a finite group and $X=\{x_1,x_2,\ldots,x_n\}$ and $Y=\{y_1,y_2,\ldots,y_n\}$  two equivalent $n$-bases for $G$ (that is, there exists $\phi \in Aut(G)$ such that 
%  $y_i=x_i \phi$ for $i=1,2,\ldots,n$). Then $ l_X(g) = l_Y(\phi (g))$ for every $g\in G$, consequently $\diam(G,X)=\diam(G,Y)$. 
% \end{prop}
% 
% \begin{proof}
% Let $k$ be the smallest number for which  $g=x_{i_1}x_{i_2}\cdots x_{i_k}$, that is $l_X(g)=k$. We have $g \phi =(x_{i_1}\phi) (x_{i_2} \phi) \cdots (x_{i_k} \phi) = y_{i_1}y_{i_2}\cdots y_{i_k}$. It shows that $l_Y(g\phi) \leq k$. On the other hand, if $l_Y(g\phi) < k$
% then there exists $y_{t_1},y_{t_2},\ldots,y_{t_s} \in Y$ with $g \phi =y_{t_1} y_{t_2}\cdots y_{t_s}$ and $s < k$. Then $$g= (y_{t_1} \phi^{-1} )(y_{t_2}\phi^{-1})\cdots (y_{t_s}\phi^{-1})=x_{t_1}x_{t_2}\cdots x_{t_s},$$ that is  $l_X(g) \leq s$. Contradiction with $l_X(g)=k$. 
% \end{proof}
%
%
 By Corollary \ref{Hall2} we can build generating sets of size two for $A_5^k$, $1\leq k\leq 19$; but for proving theorem \ref{theorem3} we just need 8 of them. Let
 \begin{align*}
&C_1=\{a,b\},\\
&C_2=\{(a,b), (b,a)\},\\
&C_3=\{(a,b,a), (b,a,b^2)\},\\
&C_4=\{(a,b,a,b^2), (b,a,b^2,a)\},\\
&C_5=\{(a,b,a,b^2,c), (b,a,b^2,a,b)\},\\
&C_6=\{(a,b,a,b^2,c,b), (b,a,b^2,a,b,c)\},\\
&C_7=\{(a,b,a,b^2,c,b,c), (b,a,b^2,a,b,c,b^2)\},\\
&C_8=\{(a,b,a,b^2,c,b,c,b^2), (b,a,b^2,a,b,c,b^2,c)\},\\
%&C_9=\{(a,b,a,b^2,c,b,c,b^2,c^2), (b,a,b^2,a,b,c,b^2,c,b)\},\\
%&C_{10}=\{(a,b,a,b^2,c,b,c,b^2,b,c^2), (b,a,b^2,a,b,c,b^2,c,c^2,b)\},\\
%&C_{11}=\{(a,b,a,b^2,c,b,c,b^2,b,c^2,b^2), (b,a,b^2,a,b,c,b^2,c,c^2,b,c^2)\},\\
%&C_{12}=\{(a,b,a,b^2,c,b,c,b^2,b,c^2,b^2,c^2), (b,a,b^2,a,b,c,b^2,c,c^2,b,c^2,b^2)\},\\
%&C_{13}=\{(a,b,a,b^2,c,b,c,b^2,b,c^2,b^2,c^2,d), (b,a,b^2,a,b,c,b^2,c,c^2,b,c^2,b^2,a)\},\\
%&C_{14}=\{(a,b,a,b^2,c,b,c,b^2,b,c^2,b^2,c^2,d,a), (b,a,b^2,a,b,c,b^2,c,c^2,b,c^2,b^2,a,d)\},\\
%&C_{15}=\{(a,b,a,b^2,c,b,c,b^2,b,c^2,b^2,c^2,d,a,d), (b,a,b^2,a,b,c,b^2,c,c^2,b,c^2,b^2,a,d,e)\},\\
%&C_{16}=\{(a,b,a,b^2,c,b,c,b^2,b,c^2,b^2,c^2,d,a,d,b), (b,a,b^2,a,b,c,b^2,c,c^2,b,c^2,b^2,a,d,e,f)\},\\
%&C_{17}=\{(a,b,a,b^2,c,b,c,b^2,b,c^2,b^2,c^2,d,a,d,b,b), (b,a,b^2,a,b,c,b^2,c,c^2,b,c^2,b^2,a,d,e,f,g)\},\\
%&C_{18}=\{(a,b,a,b^2,c,b,c,b^2,b,c^2,b^2,c^2,d,a,d,b,b,b), (b,a,b^2,a,b,c,b^2,c,c^2,b,c^2,b^2,a,d,e,f,g,h)\},\\
%&C_{19}=\{(a,b,a,b^2,c,b,c,b^2,b,c^2,b^2,c^2,d,a,d,b,b,b,b), (b,a,b^2,a,b,c,b^2,c,c^2,b,c^2,b^2,a,d,e,f,g,h,i)\}.
\end{align*} 
Then for $1\leq n \leq 8$, the sets $C_n$ are generating sets of minimum size for groups $A_5^n$. 

\begin{theorem}\label{theorem3}
The diameter of $A_5^n$, for $1 \leq n \leq 8$, is at most $ 58\, n$.
\end{theorem}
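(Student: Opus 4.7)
The plan is to apply Remark \ref{simple}: for any $(g_1,\ldots,g_n)\in A_5^n$,
$$l_{C_n}((g_1,\ldots,g_n))\leq\sum_{i=1}^n l_{C_n}((1,\ldots,g_i^{(i)},\ldots,1)),$$
so it suffices to establish the per-coordinate estimate $l_{C_n}((1,\ldots,g^{(i)},\ldots,1))\leq 58$ for every $n\leq 8$, every $i\in\{1,\ldots,n\}$, and every $g\in A_5$. Summing over $i$ then yields $\diam(A_5^n,C_n)\leq 58n$, exactly as in the corresponding pattern of the proofs of Theorems \ref{theorem1} and \ref{theorem2}.

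For each $n$ and each $i$, I would identify two ``short'' elements $u_i,v_i\in\langle C_n\rangle$ supported only on coordinate $i$ that generate $H_i:=\{(1,\ldots,g^{(i)},\ldots,1):g\in A_5\}\cong A_5$. These arise from suitable powers of the two generators $X_n,Y_n$ of $C_n$, exploiting the component orders $2,5,3$ of $a,b,c$. For example, in $C_2$ one has $X_2^5=(a,1)$ of length $5$ and $Y_2^{-4}=(b,1)$ of length $4$; more generally, for each $n\leq 8$ one first uses the identities $X_n^{k}$, $Y_n^{k}$ with $k$ a common multiple of the orders appearing in all but one ``type'' of coordinate to zero out the unwanted entries. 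When several coordinates share the same type, pure powers cannot separate them, so I would additionally use short conjugates such as $Y_n X_n^5 Y_n^{-1}$, $X_n Y_n^5 X_n^{-1}$, or analogous commutators to break the symmetry between like-typed coordinates.

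Given such a pair $\{u_i,v_i\}$, each $g\in A_5$ can be written as a word of length at most $d_i:=\diam(A_5,\{\pi_i(u_i),\pi_i(v_i)\})$ in the projected pair, so substituting back yields
$$l_{C_n}((1,\ldots,g^{(i)},\ldots,1))\leq d_i\cdot\max(l_{C_n}(u_i),l_{C_n}(v_i)).$$
The general Cayley-graph BFS bound gives $d_i\leq|A_5|-\rank(A_5)=58$, while in fact $\{\pi_i(u_i),\pi_i(v_i)\}$ is one of the $19$ inequivalent $2$-bases of $A_5$ listed at the start of the section, each of which has a much smaller diameter (around $4$--$5$); combined with isolation lengths that remain small (at most a few multiples of $\max(|a|,|b|,|c|)=5$), the product comfortably stays $\leq 58$.

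The main obstacle is the isolation step for $n\in\{3,\ldots,8\}$: when the two generators $X_n$ and $Y_n$ agree on the ``type'' of several coordinates (for instance coordinates $1$ and $3$ in $C_3$ both carry $a$ in $X_3$, while coordinate $5$ in $C_8$ introduces the order-$3$ element $c$), simple powers alone do not separate the affected coordinates, and one must invoke conjugation by $X_n$ or $Y_n$ to shift entries by inner automorphisms of $A_5$. Explicitly tabulating, for all eight values of $n$ and each coordinate $i$, the resulting isolation elements and their $C_n$-lengths, and checking that $d_i\cdot\max(l_{C_n}(u_i),l_{C_n}(v_i))\leq 58$ in every case, is expected to be routine but tedious; it is amenable to GAP computation in the same spirit as the $2$-basis enumeration already carried out in the paper.
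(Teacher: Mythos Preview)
Your plan diverges from the paper's proof in a way that creates a real gap. The paper does \emph{not} isolate each coordinate separately. For $n\ge 3$ it splits the $n$ coordinates into just two blocks and handles each block at once: e.g.\ for $C_3=\{(a,b,a),(b,a,b^2)\}$ one uses
\[
(a,b,a)^2=(1,b^2,1),\quad (b,a,b^2)^5=(1,a,1),\quad (a,b,a)^5=(a,1,a),\quad (b,a,b^2)^2=(b^2,1,b^4),
\]
so that $(1,y,1)$ is bounded via $\diam(A_5,\{a,b^2\})=9$ and $(x,1,z)$ via $\diam(A_5^2,\{(a,a),(b^2,b^4)\})=20$, both computed in GAP, giving $\diam(A_5^3,C_3)\le 5\,(9+20)=145\le 58\cdot 3$. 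The remaining cases $n=4,\dots,8$ are handled ``in the same manner'', with the two block diameters again read off from GAP.

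Your single-coordinate scheme runs into two concrete problems. First, conjugation and commutators act componentwise, so they never change the \emph{support} of an element; for coordinates that share the same entry in one generator (e.g.\ coordinates $1$ and $3$ in $C_3$, or the four odd coordinates in $C_8$), no short conjugate of a power is supported on a single such coordinate. Obtaining $(g,1,\dots,1)$ then amounts to expressing $(g,1,\dots,1)$ inside the block copy of $A_5^{k}$, which by the paper's own GAP figures already costs up to $5\cdot 20=100$ in $C_3$ and up to $6\cdot 30=180$ in $C_8$---far above your hoped-for ``a few multiples of $5$''. Second, your estimate $d_i\approx 4$--$5$ is simply wrong: the paper records $\diam(A_5,\{a,b\})=10$ and $\diam(A_5,\{a,b^2\})=9$. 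With the correct $d_i$ and realistic isolation lengths, the product $d_i\cdot\max(l_{C_n}(u_i),l_{C_n}(v_i))$ does not stay below $58$, so the per-coordinate inequality you need is unsupported, and summing it over $i$ would in any case give a bound well above $58n$. The two-block decomposition is precisely what avoids this loss.
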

\begin{proof}
Using GAP  \cite{Soicher:2022} we check that $\diam(A_5,C_1)=10$ and $\diam(A_5^2,C_2)=18$. Let $(x,y,z)$ be an arbitrary element in $A_5^3$. Then we have $$l_{C_3}(x,y,z)=l_{C_3}(x,1,z)+l_{C_3}(1,y,1).$$ On the other hand,  $(x,z) \in A_5^2=\langle (a, a), (b^2 , b^4) \rangle$ and  $\diam(A_5^2,\{(a,a),(b^2,b^4)\})=20$ and  $y\in A_5=\langle a,b^2\rangle$ and $\diam(A_5,\{a,b^2\})=9$. These facts together with the following equalities 
\begin{align*}
&(a,b,a)^2=(1,b^2,1),\\
& (b,a,b^2,)^5=(1,a,1),\\
&(a,b,a)^5=(a,1,a),\\
& (b,a,b^2,)^2=(b^2,1,b^4),
\end{align*}
lead to
 \begin{align*}
\diam(A_5^3,C_3)\leq 5\times 9+5\times 20=5 \times (9 +  20)=145.
\end{align*} 

In the same manner we can see that

\begin{align*}
&\diam(A_5^4,C_4)\leq 5 \times (20 + 20)= 200,\\
&\diam(A_5^5,C_5)\leq 6 \times (20 + 25)= 270,\\ 
&\diam(A_5^6,C_6)\leq 6 \times (25 +  25)= 300,\\ 
&\diam(A_5^7,C_7)\leq 6 \times (25 + 30)=330,\\
&\diam(A_5^8,C_8)\leq 6 \times (30 + 30)=360.
\end{align*}

\end{proof}

Forasmuch as by increasing $n$ the Cayley Graph of $A_5^n$ is growing exponentially, we could not calculate the diameter of $A_5^n$ for $n \geq 4$ with the GAP, hence we could not estimate the diameter of $A_5^n$ for $n \geq 9$ with the technique which is used in the proof of theorem \ref{theorem3}.

In 2015, the second author conjectured that the diameter of $G^n$ is growing polynomially with respect to $n$. More precisely, she conjectured that if $G$ is a finte group, then diameter of $G^n$ is at most $n (|G|-\rank(G)) $, which is called the strong conjecture. The strong conjecture has been proved for abelian groups in \cite{Karimi:2017}. Another version of the strong conjecture called the weak conjecture states that if $G$ is a finite group, then there exists a generating set of minimum size for $G^n$, for which the diameter of $G^n$ is at most $n (|G|-\rank(G)) $. The weak conjecture is proved for nilpotent groups, dihedral groups and some power of imperfect groups in \cite{Karimi:2017}. Recently, it is shown that for a solvable group $G$, the diameter of $G^n$ grows polynomially with respect to $n$ \cite{Azad&Karimi:2022}. In this paper,  theorems \ref{theorem1}, \ref{theorem3} and corollary  \ref{cor:An2} are more pieces of evidence for the weak conjecture.

\section{Acknowledgments}
 The second author wishes to thank Arak University, for the invitation and hospitality, and the International Science and Technology Interactions (ISTI) for financial support.

\bibliographystyle{amsplain}
\bibliography{nasimref}

\providecommand{\bysame}{\leavevmode\hbox to3em{\hrulefill}\thinspace}
\providecommand{\MR}{\relax\ifhmode\unskip\space\fi MR }
% \MRhref is called by the amsart/book/proc definition of \MR.
\providecommand{\MRhref}[2]{%
  \href{http://www.ams.org/mathscinet-getitem?mr=#1}{#2}
}
\providecommand{\href}[2]{#2}
\begin{thebibliography}{10}

\bibitem{Azad&Karimi:2022}
A.~Azad and N.~Karimi, \emph{Upper bounds for the diameter of a direct power of
  solvable groups, https://doi.org/10.48550/arxiv.2211.08699},  (2022).

\bibitem{Babai&Seress:1988}
L.~Babai and A.~Seress, \emph{On the diameter of {C}ayley graphs of the
  symmetric group}, J. Combin. Theory Ser. A \textbf{49} (1988), no.~1,
  175--179. \MR{957215}

\bibitem{Babai&Seress:1992}
\bysame, \emph{On the diameter of permutation groups}, European J. Combin.
  \textbf{13} (1992), no.~4, 231--243. \MR{1179520 (93h:20001)}

\bibitem{Dona:2022}
D.~Dona, \emph{The diameter of products of finite simple groups}, ARS
  MATHEMATICA CONTEMPORANEA 22 (2022).

\bibitem{Hall:1936}
P.~Hall, \emph{The eulerian functions of a group}, Quart. J. Math. (Oxford)
  \textbf{7} (1936), 134--151.

\bibitem{Helfgott&Seress:2014}
H.~A. Helfgott and A.~Seress, \emph{On the diameter of permutation groups},
  Ann. of Math. (2) \textbf{179} (2014), no.~2, 611--658. \MR{3152942}

\bibitem{Helfgott:2019}
H.A. Helfgott, \emph{Growth in linear algebraic groups and permutation groups:
  towards a uni- fied perspective}, C. M. Campbell, C. W. Parker, M. R. Quick,
  E. F. Robertson and C. M. Roney-Dougal (eds.), Groups St Andrews 2017 in
  Birmingham, Cambridge University Press, Cambridge, volume 455 of London
  Mathematical Society Lecture Note Series \textbf{455} (2019), 300--345.

\bibitem{Karimithesis:2015}
N.~Karimi, \emph{Reaching the minimum ideal in a finite semigroup}, 2015, PhD
  thesis.

\bibitem{Karimi:2017}
\bysame, \emph{Diameter of a direct power of a finite group}, Communications in
  Algebra \textbf{45} (2017), no.~11, 4869--4880.

\bibitem{Soicher:2022}
L.H. Soicher, \emph{The grape package for gap, version 4.9.0},  (2022).

\bibitem{Wiegold:1974}
J.~Wiegold, \emph{Growth sequences of finite groups}, J. Austral. Math. Soc. 17
  (1974), 133--141.

\end{thebibliography}

\end{document}